\documentclass{amsart}
\usepackage[utf8]{inputenc}
\usepackage{amssymb}
\usepackage{amsthm}
\usepackage{tikz}
\usepackage{amsmath}
\usepackage{subfigure}
\usepackage{theoremref}
\usepackage{mathtools}
\usepackage[round]{natbib}   
\usepackage[section]{placeins}

\tikzstyle{vertex}=[circle, draw, inner sep=0pt, minimum size=6pt]
\newtheorem{theorem}{Theorem}[section] 
 
\newtheorem{lemma}[theorem]{Lemma}

\theoremstyle{definition}
\newtheorem{definition}{Definition}[section]
\newtheorem{problem}[theorem]{Problem}

\theoremstyle{remark}
\newtheorem{remark}[theorem]{Remark}

\title[Partial Partition Complex]{The Partial Partition Complex}

\author{Michael J. Gottstein}

\address{Department of Mathematics and Computer Science,
  Marywood University, Scranton, Pennsylvania, USA}
\keywords{partial partitions, simplicial complexes, shellability,
  vertex decomposability, nonpure complexes, cross-polytopes,
  automorphism groups}

\begin{document}

\begin{abstract}
The set of partial partitions of $\{1,\ldots,n\}$, ordered by containment,
forms an abstract simplicial complex $D_n$ whose vertices are the nonempty
subsets of $\{1,\ldots,n\}$ and whose simplices are collections of pairwise
disjoint subsets. We prove that $D_n$ is vertex-decomposable, give an
explicit nonpure shelling, and use it to compute the reduced
homology: for $1 \le j \le n$, the homology in dimension $j-1$ is free
abelian of rank equal to the number of partitions of $\{1,\ldots,n\}$ into
$j$ blocks containing no singleton blocks. Explicit generators are
constructed as boundary complexes of $j$-dimensional cross-polytopes, one
for each non-singleton partition. We also prove that the automorphism group
of $D_n$ is the symmetric group on $n$ letters.
\end{abstract}

\maketitle

\section{Introduction}
Let $[n]$ be the set of positive integers $1,\ldots, n$. A partial partition of $[n]$ is a partition of a subset $I$ of $[n]$ (including the empty set). We denote the set of all partial partitions of $[n]$ by $D_n$. A partial partition will be denoted by dropping the braces around each block and dropping the commas within each block. For example, $\{\{1,2\},\{3\}\}$ is written as $\{12,3\}$. A \emph{singleton block} is a block of a partition with size equal to one. A \emph{singleton partition} is a partition that contains at least one singleton block; conversely, a \emph{non-singleton partition} is one in which every block has size at least two. When ordered by set inclusion, $D_n$ is an abstract simplicial complex with facets the partitions of $[n]$ and ground set the power set of $[n]$. This is different than the typical orderings on the partial partitions of $[n]$, such as the refinement order that appears in the lattice of set partitions \citep{StanleyEC1}.

The study of $D_n$ was inspired by the Rhodes semilattice, which arises in the complexity theory of semigroups and automata \citep{Rhodes}. The Rhodes semilattice plays an important role in decompositions associated with finite semigroups and automata; see also the Krohn--Rhodes theory background in \citet{RhodesSteinberg}. In the case of the trivial group, the Rhodes semilattice simplifies substantially, and the underlying set is precisely the set of partial partitions considered here. In that setting, the Rhodes semilattice is ordered by refinement, whereas in this paper, we study the same set ordered by containment. Thus, $D_n$ can be viewed as a simplicial-complex simplification of the trivial-group Rhodes semilattice.

We determine the homology of $D_n$ and provide an explicit formula for its Betti numbers, which count partitions without singleton blocks. More precisely, if $D(n,j,j)$ denotes the number of partitions of $[n]$ into $j$ blocks with no singleton blocks, then
\[
\widetilde{H}_{j-1}(D_n)\cong \mathbb{Z}^{D(n,j,j)}.
\]
We prove this by showing that $D_n$ is a shellable nonpure complex in the sense of \citet{wachs}. In the shelling order used here, the facets that contribute to homology are exactly the non-singleton partitions.

We also prove that there is a basis of $\widetilde{H}_{j-1}(D_n)$ whose elements can be represented as the boundary complexes of $j$-dimensional cross-polytopes \citep{Ziegler}. These cycles are constructed explicitly from non-singleton partitions. The resulting homology generators thus admit a concrete combinatorial description. The enumeration of partitions without singleton blocks is related to the work of \citet{bona}, who obtain these numbers by the use of generating functions.

In addition to the shelling argument, we record a recursive structural property of $D_n$. If
\[
\sigma=\{B_1,\ldots,B_k\}
\]
is a simplex of $D_n$, then
\[
\operatorname{link}_{D_n}(\sigma)\cong D_{n-\left|\bigcup_{i=1}^k B_i\right|}.
\]
Thus, every link in $D_n$ is again a partial partition complex on the unused elements. This link recursion provides a direct proof that $D_n$ is vertex-decomposable \citep{ProvanBillera,Wood}.

We also show that the automorphism group of $D_n$ is isomorphic to $S_n$. Finally, we formulate natural geometric realization questions for $D_n$.

\section{Examples}

The figures below give geometric realizations of $D_2$, $D_3$, and $D_4$.
These examples provide useful intuition for the homology of $D_n$ in small
cases.

When $n=0$, we have
\[
D_0=\{\emptyset\}.
\]
When $n=1$, we have
\[
D_1=\{\emptyset,\{1\}\},
\]
and hence $\widetilde{H}_{0}(D_1)$ is trivial. Geometric realizations of
$D_2$ and $D_3$ are shown in Figure~\ref{F1}. In these figures, every face is
labeled by its corresponding partial partition.

A realization of $D_4$ is shown in Figure~\ref{F2}. In this case, to make the
figure more readable, we label only the vertices. Each of these realizations
has two connected components. One component is a single point corresponding to
the \emph{trivial partition}, namely the partition with one block. The other component
contains the \emph{total partition}, namely the partition with $n$ singleton blocks.
This phenomenon holds for $D_n$ in general. Therefore,
\[
\widetilde{H}_{0}(D_n)\cong \mathbb{Z}
\qquad \text{for } n\geq 2.
\]
This is consistent with Theorem~\ref{T:homology-main} below: the only partition
of $[n]$ into a single non-singleton block is $\{[n]\}$ itself, so
$D(n,1,1)=1$ for $n\geq 2$.

For $n=2$ and $n=3$, the component containing the total partition is
contractible. Thus
\[
\widetilde{H}_{1}(D_2), \qquad
\widetilde{H}_{1}(D_3), \qquad
\widetilde{H}_{2}(D_3)
\]
are all trivial. The situation changes for $D_4$. In this case, the component
containing the total partition is not contractible. It contains exactly three
independent $1$-dimensional holes. Each of these holes contains a unique
non-singleton partition of size $2$, which is colored orange in Figure~\ref{F2}.

This suggests a natural basis for $\widetilde{H}_{1}(D_4)$: one basis element
for each non-singleton partition of size $2$. More precisely, one may ask
whether, for each non-singleton partition of size $2$ in $D_4$, there exists a
$1$-cycle containing exactly one such partition, and whether the homology
classes of these cycles form a basis for $\widetilde{H}_{1}(D_4)$. The answer
is yes. Consequently,
\[
\widetilde{H}_{1}(D_4)\cong \mathbb{Z}^3.
\]
More generally, the number of non-singleton partitions of size $2$ in $D_n$
is the Betti number of $\widetilde{H}_{1}(D_n)$.

We now reinterpret the basis elements for $\widetilde{H}_{1}(D_4)$ in a way
that will generalize to higher-dimensional homology. Each basis element
described above may be represented by the boundary of a $2$-dimensional
cross-polytope. Figure~\ref{F3}\subref{F3a} shows the subcomplex of $D_4$
containing the representatives associated to the non-singleton partition
$\{12,34\}$. Figure~\ref{F3}\subref{F3b} shows one such representative, and
Figure~\ref{F3}\subref{F3c} illustrates this cycle as the boundary of a
$2$-dimensional cross-polytope.

In this example, choosing such a representative amounts to choosing one
element from each block of the non-singleton partition $\{12,34\}$ and
forming the corresponding singleton blocks.
Equivalently, it is a choice of two singleton blocks from the total partition
$\{1,2,3,4\}$, one lying inside each block of $\{12,34\}$.

The same idea extends to higher dimensions. If we try to understand
$\widetilde{H}_{2}(D_n)$ using the same geometric intuition, then the
non-singleton partitions of size $3$ play the same role that the non-singleton
partitions of size $2$ played in the computation of $\widetilde{H}_{1}(D_n)$.
For example, in $D_6$, the partition
\[
\{12,34,56\}
\]
is a non-singleton partition of size $3$. By choosing one element from
each of the blocks $12$, $34$, and $56$, we obtain a cycle that represents a
nontrivial class in $\widetilde{H}_{2}(D_6)$. This cycle is the boundary of a
$3$-dimensional cross-polytope.

Figure~\ref{F4}\subref{F4a} illustrates such a choice of singleton blocks from
the total partition $\{1,2,3,4,5,6\}$. Figure~\ref{F4}\subref{F4b} shows how
this choice produces a nontrivial $2$-cycle in $D_6$ containing the partition
$\{12,34,56\}$. The precise general construction of these cycles will be given
later.

The preceding examples motivate the general homology calculation, which we prove using the theory of shellable nonpure complexes.

\begin{figure}[ht]
\centering
\subfigure[$D_2$\label{F1a}]{%
\begin{tikzpicture}[scale=3]
\node at (-.85,0){2};
\node at (-.85,1){1};
\node at (-1.15,.5){1,2};
\node at (-1.8,.5){12};
\node[circle, fill=black, inner sep=1.2pt] (1) at (-1,0){};
\node[circle, fill=black, inner sep=1.2pt] (2) at (-1,1){};
\node[circle, fill=black, inner sep=1.2pt] (3) at (-2,.5){};
\draw[blue!60] (1) -- (2);
\end{tikzpicture}%
}
\subfigure[$D_3$\label{F1b}]{%
\resizebox{0.4\textwidth}{!}{%
\begin{tikzpicture}[scale=2/1.414, join=round]
\tikzstyle{conefill} = [fill=blue!40,fill opacity=0.5]
\node at (-2,.65){123};
\node at (.15,1.5){1};
\node at (-1.016,.05){2};
\node at (1.016,.05){3};
\node at (0,2.65){23};
\node at (-1.403,-.75){13};
\node at (1.403,-.75){12};
\node at (1.50,-.3){3,12};
\node at (-1.50,-.3){2,13};
\node at (-.3,2){1,23};
\node at (0,.65){1,2,3};
\node at (0,-.2){2,3};
\node at (.7,.7){1,3};
\node at (-.7,.7){1,2};
\node[circle, fill=black, inner sep=1.2pt] (1) at (-2.4,.63){};
\node[circle, fill=black, inner sep=1.2pt] (1) at (0,1.5){};
\node[circle, fill=black, inner sep=1.2pt](2) at (-0.866,0){};
\node[circle, fill=black, inner sep=1.2pt](3) at (0.866,0){};
\node[circle, fill=black, inner sep=1.2pt] (4) at (0,2.5){};
\node[circle, fill=black, inner sep=1.2pt] (5) at (-1.573,-.707){};
\node[circle, fill=black, inner sep=1.2pt] (6) at (1.573,-.707){};
\filldraw[conefill] (0,1.5)--(-0.866,0)--(0.866,0)--cycle;
\draw[green!80] (1) -- (4);
\draw[green!80] (2) -- (5);
\draw[green!80] (3) -- (6);
\end{tikzpicture}%
}%
}
\caption{Every face is labeled with its corresponding partial partition. We drop the outermost braces for clarity. For example, $\{12\}$ is written as $12$ and $\{12,3\}$ is written as $12,3$.}
\label{F1}
\end{figure}
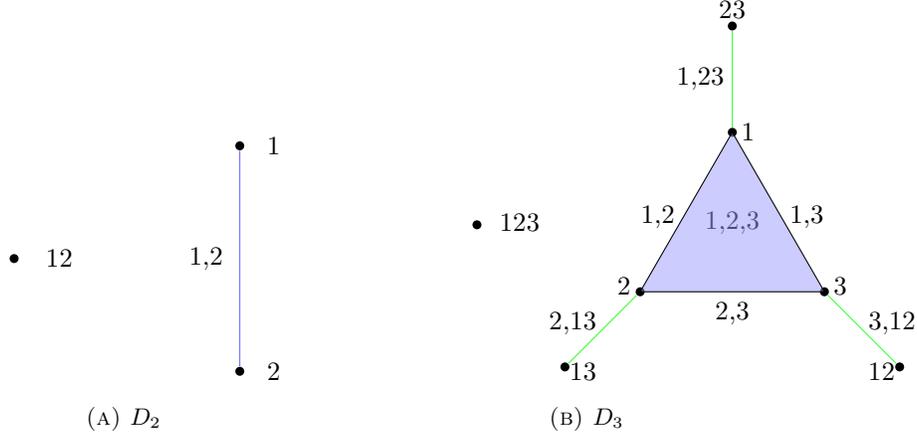

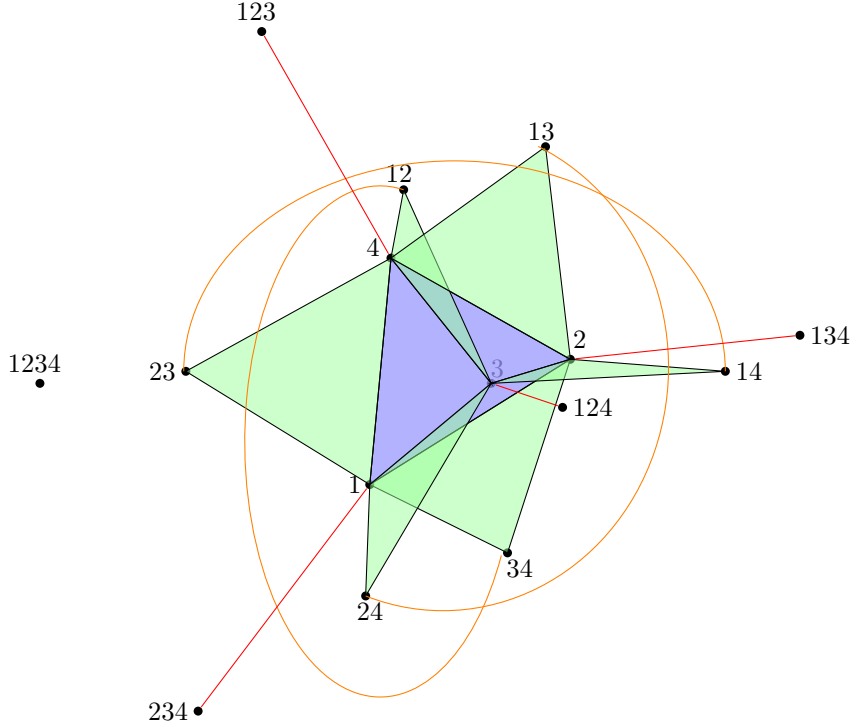
\begin{figure}[ht]
\centering
\begin{tikzpicture}[scale=1.7,  rotate around x=30]
\tikzstyle{conefill} = [fill=blue!40,fill opacity=0.5]
\tikzstyle{conefill1} = [fill=green!40,fill opacity=0.5]

\node[circle, fill=black, inner sep=1.2pt] (0) at (-2,1,0){};
\node[circle, inner sep=1.2pt] at (-2,1.2,0){1234};
\node[circle, fill=black, inner sep=1.2pt] (1) at (0,0,0){};
\node[circle,  inner sep=1.2pt] at (-0.1,0,0){1};
\node[circle, fill=black, inner sep=1.2pt] (2) at (1,0,-1){};
\node[circle,  inner sep=1.2pt] at (1.1,0.2,-1){2};
\node[circle, fill=black, inner sep=1.2pt] (3) at (1,1,0){};
\node[circle,  inner sep=1.2pt] at (1.07,1.15,0){3};
\node[circle, fill=black, inner sep=1.2pt] (4) at (0,1,-1){};
\node[circle,  inner sep=1.2pt] at (-.1,1.1,-1){4};
\node[circle, fill=black, inner sep=1.2pt] (5) at (.5,-1.293,-.5){};
\node[circle,  inner sep=1.2pt] at (.55,-1.45,-.5){34};
\node[circle, fill=black, inner sep=1.2pt] (6) at (.5,2.293,-.5){};
\node[circle,  inner sep=1.2pt] at (.5,2.45,-.5){12};
\node[circle, fill=black, inner sep=1.2pt] (7) at (-1.293,.5,-.5){};
\node[circle,  inner sep=1.2pt] at (-1.45,.5,-.5){23};
\node[circle, fill=black, inner sep=1.2pt] (8) at ( 2.293,.5,-.5){};
\node[circle,  inner sep=1.2pt] at ( 2.45,.5,-.5){14};
\node[circle, fill=black, inner sep=1.2pt] (9) at (.5, .5, 1.293){};
\node[circle,  inner sep=1.2pt] at (.5, .35, 1.293){24};
\node[circle, fill=black, inner sep=1.2pt] (10) at (.5, .5,-2.293){};
\node[circle,  inner sep=1.2pt] at (.5, .65,-2.293){13};
\node[circle, fill=black, inner sep=1.2pt] (11) at (-1,-1,1){};
\node[circle,  inner sep=1.2pt] at (-1.2,-1,1){234};
\node[circle, fill=black, inner sep=1.2pt] (12) at (2,-1,-2){};
\node[circle,  inner sep=1.2pt] at (2.2,-1,-2){134};
\node[circle, fill=black, inner sep=1.2pt] (13) at (2,2,1){};
\node[circle,  inner sep=1.2pt]  at (2.2,2,1){124};
\node[circle, fill=black, inner sep=1.2pt] (14) at (-1,2,-2){};
\node[circle,  inner sep=1.2pt]  at (-1,2.2,-2){123};

\filldraw[conefill] (0,0,0) -- (1,0,-1) -- (1,1,0) -- cycle ;
\filldraw[conefill] (0,0,0) -- (0,1,-1) -- (1,1,0)-- cycle ;
\filldraw[conefill] (0,0,0) -- (1,0,-1) -- (0,1,-1)-- cycle;
\filldraw[conefill] (1,0,-1) -- (1,1,0) -- (0,1,-1)-- cycle;
\filldraw[conefill1] (0,0,0) -- (.5,-1.293,-.5) -- (1,0,-1) --cycle;
\filldraw[conefill1] (1,1,0) -- (.5,2.293,-.5) -- (0,1,-1) -- cycle;
\filldraw[conefill1] (0,0,0) -- (-1.293,.5,-.5) -- (0,1,-1) -- cycle;
\filldraw[conefill1] (1,0,-1) -- ( 2.293,.5,-.5) -- (1,1,0) -- cycle;
\filldraw[conefill1] (0,0,0) -- (.5, .5, 1.293) -- (1,1,0) -- cycle;
\filldraw[conefill1] (0,1,-1) -- (.5, .5,-2.293) -- (1,0,-1) -- cycle;

\draw[orange] ( 2.293,.5,-.5) arc (0:180:1.8cm and 1.4cm);
\draw[orange] (.5,.5, 1.293) arc (-110:65:1.5cm and 1.62cm);
\draw[orange] ( .5, 2.293, -.5) arc (-280:-26.5:.9cm and 1.7cm);
\draw[red](1) -- (11);
\draw[red](2) -- (12);
\draw[red](3) -- (13);
\draw[red] (4) -- (14);
\end{tikzpicture}
\caption{$D_4$. The blue solid tetrahedron corresponds to the total partition $\{1, 2, 3, 4\}$. The green solid $2$-faces correspond to the singleton partitions $\{1, 2, 34\}$, $\{1, 3, 24\}$, $\{1, 4,  23\}$, $\{2, 3, 14\}$, $\{2, 4, 13\}$, $\{3, 4, 12\}$. The curved orange $1$-faces correspond to the non-singleton partitions $\{12, 34\}$, $\{13, 24\}$, $\{14, 23\}$. The remaining $1$-faces colored in red correspond to the singleton partitions $\{1, 234\}$, $\{2, 134\}$, $\{3, 124\}$, $\{4, 123\}$. The dot corresponds to the trivial partition $\{1234\}$.}
\label{F2}
\end{figure}

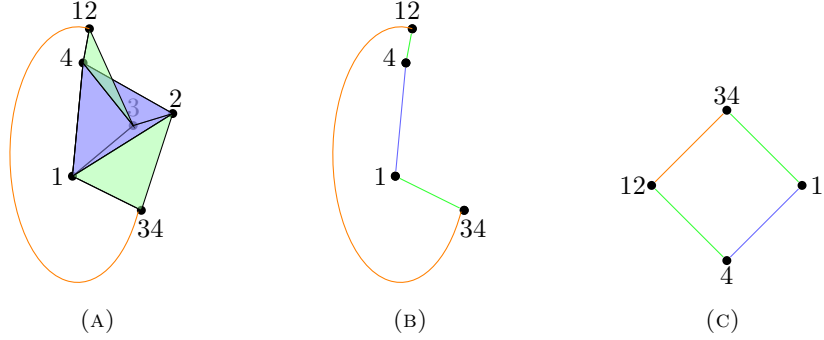
\begin{figure}[ht]
\centering
\subfigure[\label{F3a}]{%
\begin{tikzpicture}[scale=1,  rotate around x=30]
\tikzstyle{conefill} = [fill=blue!40,fill opacity=0.5]
\tikzstyle{conefill1} = [fill=green!40,fill opacity=0.5]

\node[circle, fill=black, inner sep=1.2pt] (1) at (0,0,0){};
\node[circle,  inner sep=1.2pt] at (-0.2,0,0){1};
\node[circle, fill=black, inner sep=1.2pt] (2) at (1,0,-1){};
\node[circle,  inner sep=1.2pt] at (1.1,0.3,-1){2};
\node[circle, fill=black, inner sep=1.2pt] (3) at (1,1,0){};
\node[circle,  inner sep=1.2pt] at (1.07,1.35,0){3};
\node[circle, fill=black, inner sep=1.2pt] (4) at (0,1,-1){};
\node[circle,  inner sep=1.2pt] at (-.2,1.1,-1){4};
\node[circle, fill=black, inner sep=1.2pt] (5) at (.5,-1.293,-.5){};
\node[circle,  inner sep=1.2pt] at (.55,-1.65,-.5){34};
\node[circle, fill=black, inner sep=1.2pt] (6) at (.5,2.293,-.5){};
\node[circle,  inner sep=1.2pt] at (.5,2.65,-.5){12};

\filldraw[conefill] (0,0,0) -- (1,0,-1) -- (1,1,0) -- cycle ;
\filldraw[conefill] (0,0,0) -- (0,1,-1) -- (1,1,0)-- cycle ;
\filldraw[conefill] (0,0,0) -- (1,0,-1) -- (0,1,-1)-- cycle;
\filldraw[conefill] (1,0,-1) -- (1,1,0) -- (0,1,-1)-- cycle;
\filldraw[conefill1] (0,0,0) -- (.5,-1.293,-.5) -- (1,0,-1) --cycle;
\filldraw[conefill1] (1,1,0) -- (.5,2.293,-.5) -- (0,1,-1) -- cycle;

\draw[black] (5) -- (1);
\draw[black] (1) -- (4);
\draw[black] (4) -- (6);
\draw[orange] ( .5, 2.293, -.5) arc (-280:-26.5:.9cm and 1.7cm);
\end{tikzpicture}%
}\hfil
\subfigure[\label{F3b}]{%
\begin{tikzpicture}[scale=1,  rotate around x=30]
\tikzstyle{conefill} = [fill=blue!40,fill opacity=0.5]
\tikzstyle{conefill1} = [fill=green!20,fill opacity=0.5]

\node[circle, fill=black, inner sep=1.2pt] (1) at (0,0,0){};
\node[circle, fill=black, inner sep=1.2pt] (4) at (0,1,-1){};
\node[circle, fill=black, inner sep=1.2pt] (5) at (.5,-1.293,-.5){};
\node[circle, fill=black, inner sep=1.2pt] (6) at (.5,2.293,-.5){};

\node[circle, fill=black, inner sep=1.2pt] (1) at (0,0,0){};
\node[circle,  inner sep=1.2pt] at (-0.2,0,0){1};
\node[circle, fill=black, inner sep=1.2pt] (4) at (0,1,-1){};
\node[circle,  inner sep=1.2pt] at (-.2,1.1,-1){4};
\node[circle, fill=black, inner sep=1.2pt] (5) at (.5,-1.293,-.5){};
\node[circle,  inner sep=1.2pt] at (.55,-1.65,-.5){34};
\node[circle, fill=black, inner sep=1.2pt] (6) at (.5,2.293,-.5){};
\node[circle,  inner sep=1.2pt] at (.5,2.65,-.5){12};

\draw[green!80] (5) -- (1);
\draw[blue!60] (1) -- (4);
\draw[green!80] (4) -- (6);

\draw[orange] ( .5, 2.293, -.5) arc (-280:-26.5:.9cm and 1.7cm);
\end{tikzpicture}%
}\hfil
\subfigure[\label{F3c}]{%
\begin{tikzpicture}[scale=1]
\node[circle, fill=black, inner sep=1.2pt] (3) at (1,0){};
\node[circle, fill=black, inner sep=1.2pt] (4) at (-1,0){};
\node[circle, fill=black, inner sep=1.2pt] (5) at (0,1){};
\node[circle, fill=black, inner sep=1.2pt] (6) at (0,-1){};

\node[circle, inner sep=1.2pt] at (1.2,0){1};
\node[circle, inner sep=1.2pt] at (-1.25,0){12};
\node[circle, inner sep=1.2pt] at (0,1.2){34};
\node[circle, inner sep=1.2pt] at (0,-1.2){4};

\draw[orange] (5) -- (4);
\draw[green!80] (5) -- (3);
\draw[blue!60] (3) -- (6);
\draw[green!80] (4) -- (6);
\end{tikzpicture}%
}
\caption{The orange $1$-face is the size $2$ non-singleton partition $\{12,34\}$. The blue $1$-face in (b) is the partial partition $\{1,4\}$. The two green $1$-faces are the partial partitions $\{1,34\}$ and $\{12,4\}$.}
\label{F3}
\end{figure}

\begin{figure}[htbp]
\centering
\subfigure[\label{F4a}]{%
\resizebox{0.3\textwidth}{!}{%
\begin{tikzpicture}[scale=.6, rotate around x=40, rotate around y=45, rotate around z=0]
\tikzstyle{conefill} = [fill=orange!80,fill opacity=0.5]
\tikzstyle{conefill1} = [fill=blue!40,fill opacity=0.5]

\node at (-1.9,-.8,-3){12};
\node at (1.7,-.6,-3){34};
\node at (-0.2,-3.9,-3){56};
\node[circle, fill=black, inner sep=1.2pt] (12) at (-1.5,-1,-3){};
\node[circle, fill=black, inner sep=1.2pt] (34) at (1.5,-1,-3){};
\node[circle, fill=black, inner sep=1.2pt] (56) at (0,-3.5,-3){};

\draw[black] (12) -- (34);
\draw[black] (12) -- (56);
\draw[black] (34) -- (56);

\filldraw[conefill] (-1.5,-1,-3) -- (1.5,-1,-3) -- (0,-3.5,-3) -- cycle;

\node at (0.1,.6,0){5};
\node at (-1.5,-3.1,0){3};
\node at (1.8,-2.7,0){1};
\node at (1.7,-.4,0){6};
\node at (0,-4.2,0){2};
\node at (-1.7,-.8,0){4};

\node[circle, fill=black, inner sep=1.2pt] (p0) at (0,0,0){};
\node[circle, fill=black, inner sep=1.2pt] (p1) at (-1.5,-1,0){};
\node[circle, fill=black, inner sep=1.2pt] (p2) at (-1.5,-2.5,0){};
\node[circle, fill=black, inner sep=1.2pt] (p3) at (1.5,-2.5,0){};
\node[circle, fill=black, inner sep=1.2pt] (p4) at (1.5,-1,0){};
\node[circle, fill=black, inner sep=1.2pt] (p5) at (0,-3.5,0){};

\draw[black] (p0) -- (p1);
\draw[black] (p0) -- (p2);
\draw[black] (p0) -- (p3);
\draw[black] (p0) -- (p4);
\draw[black] (p0) -- (p5);
\draw[black] (p1) -- (p2);
\draw[black] (p1) -- (p3);
\draw[black] (p1) -- (p4);
\draw[black] (p1) -- (p5);
\draw[black] (p2) -- (p3);
\draw[black] (p2) -- (p4);
\draw[black] (p2) -- (p5);
\draw[black] (p3) -- (p4);
\draw[black] (p3) -- (p5);
\draw[black] (p4) -- (p5);

\filldraw[conefill1] (0,0) -- (-1.5,-2.5) -- (1.5,-2.5) -- cycle;
\end{tikzpicture}%
}%
}\hfil
\subfigure[\label{F4b}]{%
\resizebox{0.3\textwidth}{!}{%
\begin{tikzpicture}[scale=1.7,line join=bevel,z=-5.5]

\node[circle, fill=black, inner sep=1.2pt] (A1) at (0,0,-1){};
\node[circle, fill=black, inner sep=1.2pt] (A2) at (-1,0,0){};
\node[circle, fill=black, inner sep=1.2pt] (A3) at (0,0,1){};
\node[circle, fill=black, inner sep=1.2pt] (A4) at (1,0,0){};
\node[circle, fill=black, inner sep=1.2pt] (B1) at (0,1,0){};
\node[circle, fill=black, inner sep=1.2pt] (C1) at (0,-1,0){};

\node[circle, inner sep=1.2pt] at (0.15,.15,-1){5};
\node[circle, inner sep=1.2pt] at (-1.2,0,0){12};
\node[circle, inner sep=1.2pt] at (-0.15,-0.15,1){56};
\node[circle, inner sep=1.2pt] at (1.2,0,0){1};
\node[circle, inner sep=1.2pt] at (0,1.2,0){34};
\node[circle, inner sep=1.2pt] at (0,-1.2,0){3};

\coordinate (A1) at (0,0,-1);
\coordinate (A2) at (-1,0,0);
\coordinate (A3) at (0,0,1);
\coordinate (A4) at (1,0,0);
\coordinate (B1) at (0,1,0);
\coordinate (C1) at (0,-1,0);

\draw (A1) -- (A2) -- (B1) -- cycle;
\draw (A4) -- (A1) -- (B1) -- cycle;
\draw (A1) -- (A2) -- (C1) -- cycle;
\draw [fill opacity=0.5,fill=blue!40] (A4) -- (A1) -- (C1) -- cycle;
\draw [fill opacity=0.5,fill=orange!80] (A2) -- (A3) -- (B1) -- cycle;
\draw [fill opacity=0.5,fill=green!40] (A3) -- (A4) -- (B1) -- cycle;
\draw [fill opacity=0.5,fill=green!40] (A2) -- (A3) -- (C1) -- cycle;
\draw [fill opacity=0.5,fill=green!40] (A3) -- (A4) -- (C1) -- cycle;
\end{tikzpicture}%
}%
}
\caption{The orange triangle is the size $3$ non-singleton partition $\{12,34,56\}$. The blue triangle is the partial partition $\{1,3,5\}$. The green triangles are the partial partitions formed by interchanging the non-singleton blocks with the corresponding singleton blocks in all possible ways.}
\label{F4}
\end{figure}
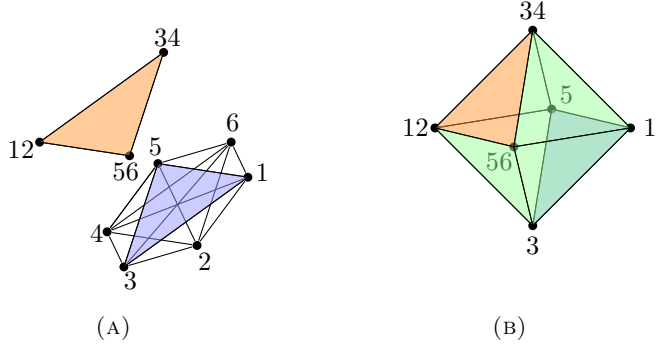

\FloatBarrier

\section{Link Recursion and Vertex Decomposability}

We now record one of the main structural properties of the complex $D_n$: the link of any simplex in $D_n$ is again a partial partition complex on a smaller ground set. We use the standard definition of the link of a face in an abstract simplicial complex \citep{Munkres}.


\begin{theorem}[Link recursion]\label{T:link-recursion}
Let $\sigma=\{B_1,\dots,B_k\} \in D_n$, and $U(\sigma)=\bigcup_{i=1}^k B_i$.
Then
\[
\operatorname{link}_{D_n}(\sigma)
\cong
D_{m},
\]
where $m=n-|U(\sigma)|$.
\end{theorem}

\begin{proof}
By definition,
\[
\operatorname{link}_{D_n}(\sigma)
=
\left\{
\tau\in D_n:
\tau\cap \sigma=\emptyset
\text{ and }
\tau\cup \sigma\in D_n
\right\}.
\]
Since simplices of $D_n$ are collections of pairwise disjoint nonempty subsets of $[n]$, the condition $\tau\cup\sigma\in D_n$
means that every block appearing in $\tau$ must be disjoint from every block appearing in $\sigma$.

Write $\sigma=\{B_1,\dots,B_k\}$
and let $U(\sigma)=\bigcup_{i=1}^k B_i$.
If $A$ is a vertex of $D_n$, then $A$ belongs to the link of $\sigma$ if and only if $\sigma\cup\{A\}$ is a simplex of $D_n$, which is true if and only if $A\cap B_i=\emptyset$ for every $i=1,\dots,k$. Equivalently, $A\subseteq [n]\setminus U(\sigma).$ Thus the vertices of $\operatorname{link}_{D_n}(\sigma)$ are precisely the nonempty subsets of the complement $[n]\setminus U(\sigma).$

Now consider a collection of vertices $A_1,\dots, A_r$ in the link of $\sigma$. These vertices form a simplex in $
\operatorname{link}_{D_n}(\sigma)$
if and only if $ \sigma\cup\{A_1,\dots,A_r\}$
is a simplex of $D_n$. Since each $A_j$ is already disjoint from every $B_i$, this is equivalent to requiring that $A_1,\dots,A_r$
are pairwise disjoint.

Therefore $\operatorname{link}_{D_n}(\sigma)$ is exactly the simplicial complex whose vertices are the nonempty subsets of
$[n]\setminus U(\sigma),$ and whose simplices are collections of pairwise disjoint such subsets. This is just the partial partition complex on the ground set $[n]\setminus U(\sigma).$
Hence,
\[
\operatorname{link}_{D_n}(\sigma)
\cong
D_{[n]\setminus U(\sigma)}.
\]

Finally, since $|[n]\setminus U(\sigma)|=n-|U(\sigma)|$, the complex $D_{[n]\setminus U(\sigma)}$ is isomorphic to $D_m$, where $m=n-|U(\sigma)|.$
\end{proof}

We now use this recursive structure to prove that $D_n$ is vertex decomposable.

Recall that a simplicial complex $\Delta$ is \emph{vertex decomposable} in the sense of \citet{ProvanBillera} if either $\Delta$ is a simplex, or there exists a vertex $v$ of $\Delta$ such that
\begin{enumerate}
    \item both $\Delta\setminus v$ and $\operatorname{link}_{\Delta}(v)$ are vertex decomposable, and
    \item no face of $\operatorname{link}_{\Delta}(v)$ is a facet of $\Delta\setminus v$.
\end{enumerate}
A vertex satisfying Condition (2) is called a \emph{shedding vertex}.

\begin{theorem}\label{T:vertex-decomposable}
The complex $D_n$ is vertex decomposable.
\end{theorem}

\begin{proof}
We argue by induction on $n$. The base cases $n=0$ and $n=1$ are immediate, since $D_0=\{\emptyset\}$ and $D_1=\{\emptyset,\{1\}\}$ are both simplices.

For $n\geq 2$, we give an explicit recursive deletion order. Order the non-singleton vertices of $D_n$, that is, the subsets
\[
B\subseteq [n],
\qquad
|B|\geq 2,
\]
by non-increasing cardinality, i.e., the vertices of larger cardinality appear first in the order, and vertices of the same size are ordered arbitrarily. We claim that deleting the non-singleton vertices in this order gives a vertex decomposition of $D_n$.

Let $B$ be a non-singleton vertex of largest cardinality at some stage of the deletion process, and let $\Delta$ denote the complex remaining at that stage.

We first check the shedding condition. Let $\tau\in \operatorname{link}_{\Delta}(B).$
Then
$
\tau\cup\{B\}
$
is a simplex of $\Delta$, and hence also a simplex of $D_n$. Therefore every vertex of $\tau$ is disjoint from $B$.

Since $B$ is non-singleton, choose an element
$
b\in B.
$
Then the singleton vertex
$
\{b\}
$
is disjoint from every vertex of $\tau$, because every vertex of $\tau$ is disjoint from all of $B$. Hence
$
\tau\cup\{\{b\}\}
$
is a simplex of $D_n$.

Moreover, the singleton vertex $\{b\}$ has not been deleted before this stage, because our deletion order deletes only non-singleton vertices. Also,
$
\{b\}\neq B
$
because $B$ is non-singleton. Therefore $\{b\}$ is a vertex of
$
\Delta\setminus B.
$
It follows that
$
\tau\cup\{\{b\}\}
$
is a face of $\Delta\setminus B$ properly containing $\tau$. Thus $\tau$ is not a facet of $\Delta\setminus B$.

Since $\tau$ was an arbitrary face of $\operatorname{link}_{\Delta}(B)$, no face of the link is a facet of the deletion. Therefore, $B$ is a shedding vertex.

It remains to verify the recursive condition. By Theorem~\ref{T:link-recursion},
$
\operatorname{link}_{D_n}(B)\cong D_{n-|B|}.
$
The current $\Delta$ is obtained from $D_n$ by deleting some non-singleton vertices of cardinality strictly greater than $|B|$, namely those that come earlier in the deletion order. Restricting the deletion process to vertices that are subsets of $[n]\setminus B$ (the vertices of the link), the previously deleted vertices form an initial segment of the analogous deletion order on $D_{n-|B|}$. Since $|B|\geq 2$, we have $n-|B|<n$, so by the inductive hypothesis $D_{n-|B|}$ is vertex decomposable, and the same recursive argument applies to $\operatorname{link}_\Delta(B)$ as a partial run of the deletion order on $D_{n-|B|}$.

Likewise, $\Delta\setminus B$ is obtained by continuing the same deletion process on $D_n$ after removing $B$, and the same induction applies.

When the process is complete, every non-singleton vertex has been deleted. The remaining complex has vertex set $\{\{1\},\dots,\{n\}\}$, and these singletons are pairwise disjoint, so they form an $(n-1)$-simplex. Hence $D_n$ is vertex decomposable.
\end{proof}

Although vertex decomposability implies shellability, we next give an explicit shelling order and prove shellability directly with respect to this order. This explicit shelling is useful because it allows us to enumerate the resulting homology basis elements.

\section{Shellability of $D_n$}

Suppose $\Delta$ is an abstract simplicial complex. For a face $F \in \Delta$ the \emph{closure} of $F$ is the smallest subcomplex of $\Delta$ containing $F$, equal to $\bar{F}=\{\tau \in \Delta \mid \tau \subseteq F\}$ \citep{Munkres}. Now we discuss shellable nonpure complexes \citep{wachs}.

\begin{definition}
A complex $\Delta$ is shellable if its facets can be arranged in linear order $F_1,F_2,\ldots,F_t$ such that the subcomplex $(\bigcup_{r<s} \bar{F}_{r})\cap \bar{F}_s$ is pure and $(\dim F_s-1)$-dimensional for all $s = 2,\ldots, t$. Such an ordering of facets is called a shelling.
\end{definition}

The following lemma is a reformulation of the previous definition.

\begin{lemma}\label{L2.1}
An order $F_{1}, F_{2}, \ldots, F_{t}$ of the facets of $\Delta$ is a shelling if and only if for every $q$ and $s$ with $1 \leq q<s \leq t$ there is an $r$ with $1 \leq r<s$ and an $x \in F_{s}$ such that $F_{q} \cap F_{s} \subseteq F_{r} \cap F_{s}=F_{s}\setminus\{x\}$.
\end{lemma}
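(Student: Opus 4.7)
The plan is to prove both directions of the equivalence by rewriting the subcomplex
$\Gamma_s := \bigl(\bigcup_{r<s}\bar{F}_r\bigr)\cap \bar{F}_s$ as $\bigcup_{r<s}\overline{F_r\cap F_s}$ and translating ``pure and $(\dim(F_s)-1)$-dimensional'' into information about the intersections $F_r\cap F_s$. A useful preliminary observation I would establish first is that $F_s\notin\Gamma_s$: if $F_s\subseteq F_r$ for some $r<s$, then since both are facets (hence maximal faces of $\Delta$) we would have $F_r=F_s$, contradicting $r<s$. Consequently every face of $\Gamma_s$ is strictly contained in $F_s$ and so has dimension at most $\dim(F_s)-1$.

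For the forward direction, assume the ordering is a shelling, and fix $q<s$. Then $F_q\cap F_s$ is a face of $\Gamma_s$, and by purity it is contained in some facet of $\Gamma_s$ of dimension exactly $\dim(F_s)-1$; as a subset of $F_s$ of size $|F_s|-1$, this facet has the form $F_s\setminus\{x\}$ for a unique $x\in F_s$. Since $F_s\setminus\{x\}\in\Gamma_s$, it lies in some $\bar{F}_r$ with $r<s$, yielding the chain $F_s\setminus\{x\}\subseteq F_r\cap F_s\subseteq F_s$. The preliminary observation rules out $F_r\cap F_s=F_s$, so the first inclusion must be equality: $F_r\cap F_s=F_s\setminus\{x\}$, providing the required $r$ and $x$.

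For the backward direction, I need to show that every facet of $\Gamma_s$ has dimension $\dim(F_s)-1$. Any facet of $\Gamma_s$ must equal $F_q\cap F_s$ for some $q<s$, since it is a maximal face of $\bigcup_{r<s}\overline{F_r\cap F_s}$ and hence coincides with one of the generators. Applying the hypothesis to this $q$ produces $r<s$ and $x\in F_s$ with $F_q\cap F_s\subseteq F_r\cap F_s=F_s\setminus\{x\}$; maximality of $F_q\cap F_s$ among the generators forces equality throughout, so the facet equals $F_s\setminus\{x\}$, of the correct dimension $\dim(F_s)-1$.

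The delicate step in each direction is pinning down that the codimension-one face $F_s\setminus\{x\}$ is realized exactly as an intersection $F_r\cap F_s$, and this in both cases rests on combining the hypothesis at hand with the preliminary observation that $F_s$ itself cannot appear in $\Gamma_s$. Once this is in place, the remaining verifications are routine dimension counts.
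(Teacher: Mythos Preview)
The paper does not supply its own proof of this lemma: it is quoted verbatim as \cite[Lemma~2.3]{wachs} and used as a black box. So there is nothing in the paper to compare your argument against. That said, your proof is correct and is essentially the standard argument one finds in Bj\"orner--Wachs; the rewriting $\Gamma_s=\bigcup_{r<s}\overline{F_r\cap F_s}$ together with the observation $F_s\notin\Gamma_s$ is exactly what drives both directions there as well.
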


Given a shelling $F_{1}, F_{2}, \ldots, F_{t}$ of $\Delta$, with successively generated subcomplexes $\Delta_{s}=\bigcup_{r=1}^s \bar{F}_{r}$, the restriction of facet $F_{s}$ is
\[
\mathcal{R}(F_{s})=\{x \in F_{s} \mid F_{s}\setminus\{x\} \in \Delta_{s-1}\}.
\]
$\mathcal{R}$ restricts a facet to the vertices that do not obstruct $F_s$ from being in $\Delta_{s-1}$ when deleted. Let
\[
\Gamma_{j,k}=\{F \in \Delta : |F|=j , |\mathcal{R}(F)|=k\}.
\]

We use the following consequence of the homology theory of shellable nonpure complexes developed by \citet{wachs}.

\begin{theorem}\label{T2.2}
Let $\sigma_{F}$ be a $j-1$ cycle such that $\sigma_F$ contains a unique $F \in \Gamma_{j,j}$, then the classes $[\sigma_F]$ are a basis for $\widetilde{H}_{j-1}(\Delta)$ for  $1 \leq j \leq n$ and $\widetilde{H}_{j-1}(\Delta)\cong \mathbb{Z}^{|\Gamma_{j,j}|}$ for  $1 \leq j \leq n.$
\end{theorem}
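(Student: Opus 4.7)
The plan is to obtain the result by assembling two standard consequences of nonpure shellability from \cite{wachs}. First, a shellable complex $\Delta$ has the homology of a wedge of spheres, with one $(|F|-1)$-dimensional sphere contributed by each facet $F$ of the shelling satisfying $\mathcal{R}(F)=F$. Since the size-$j$ facets with $\mathcal{R}(F)=F$ are exactly the elements of $\Gamma_{j,j}$, this yields $\widetilde{H}_{j-1}(\Delta) \cong \mathbb{Z}^{|\Gamma_{j,j}|}$, which is the rank formula in the statement.

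Given the rank, the basis claim reduces to showing that the $|\Gamma_{j,j}|$-many classes $[\sigma_F]$ are linearly independent, since the cardinality already matches the free rank of $\widetilde{H}_{j-1}(\Delta)$. Suppose $\sum_{F \in \Gamma_{j,j}} c_F [\sigma_F] = 0$ in $\widetilde{H}_{j-1}(\Delta)$, so that $\sum_F c_F \sigma_F = \partial \gamma$ for some $j$-chain $\gamma$. Fix $F_0 \in \Gamma_{j,j}$. By the uniqueness hypothesis, $F_0$ appears in $\sigma_{F_0}$ with coefficient $\pm 1$ and does not appear in $\sigma_F$ for any other $F \in \Gamma_{j,j}$, so the coefficient of $F_0$ on the left-hand side is $\pm c_{F_0}$.

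On the right-hand side, the coefficient of $F_0$ in $\partial \gamma$ is a signed sum over the $j$-simplices of $\Delta$ appearing in $\gamma$ that contain $F_0$ as a codimension-one face. But every $F \in \Gamma_{j,j}$ is a facet of $\Delta$, since $\mathcal{R}$ is defined only on facets of the shelling. Therefore no face of $\Delta$ strictly contains $F_0$, the coefficient of $F_0$ in $\partial \gamma$ is $0$, and comparing coefficients forces $c_{F_0}=0$ for every $F_0 \in \Gamma_{j,j}$, establishing linear independence.

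The main obstacle I anticipate is pinning down which precise statements in \cite{wachs} to cite for the rank formula, since the author describes Theorem \thref{T2.2} as combining several results from there. The linear independence step itself is short, but it relies crucially on elements of $\Gamma_{j,j}$ being maximal faces of $\Delta$, which is what makes the coefficient comparison at $F_0$ unambiguous.
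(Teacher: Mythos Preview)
The paper itself does not prove \thref{T2.2}; it simply records it as a combination of results from \cite{wachs}. Your proposal goes further by sketching an argument, and the linear-independence step is correct: each $F_0\in\Gamma_{j,j}$ is a facet of $\Delta$, so it cannot appear in any boundary $\partial\gamma$, and the coefficient comparison forces all $c_{F_0}=0$.

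There is, however, a genuine gap in the reduction you make just before that. Over $\mathbb{Z}$, a linearly independent set whose cardinality equals the free rank need \emph{not} be a basis: $\{2\}$ is linearly independent in $\mathbb{Z}$ but does not generate it. So ``linear independence plus matching rank'' is not enough to conclude that the $[\sigma_F]$ form a $\mathbb{Z}$-basis of $\widetilde{H}_{j-1}(\Delta)$.

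Your argument is easily repaired in either of two ways. First, your coefficient comparison actually produces a retraction: the map $\psi\colon\widetilde{H}_{j-1}(\Delta)\to\mathbb{Z}^{|\Gamma_{j,j}|}$ sending a class to its vector of coefficients on the facets $F\in\Gamma_{j,j}$ is well defined (exactly because those $F$ are facets), and $\psi$ sends $[\sigma_F]$ to $\pm e_F$. Thus the span of the $[\sigma_F]$ is a direct summand of $\widetilde{H}_{j-1}(\Delta)\cong\mathbb{Z}^{|\Gamma_{j,j}|}$ of the same rank, hence all of it. Second, and closer to how \cite{wachs} proceeds (and to what the paper itself invokes at the end of Section~4.4), the subcomplex $\Delta^{*}=\Delta\setminus\bigcup_j\Gamma_{j,j}$ is contractible; given any $(j-1)$-cycle $\tau$, subtracting off $\sum_F a_F\sigma_F$ (with $a_F$ the coefficient of $F$ in $\tau$) leaves a cycle supported in $\Delta^{*}$, hence a boundary, which shows the $[\sigma_F]$ span. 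Either fix closes the gap.
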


Now we prove that $D_n$ is nonpure shellable.

\begin{lemma}\label{L3.2}
$D_n$ is a shellable nonpure complex.
\end{lemma}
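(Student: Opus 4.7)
The plan is to exhibit an explicit shelling order and verify it using \thref{L2.1}. For a partition $F=\{B_1,\ldots,B_j\}$ of $[n]$ with blocks labelled so that $\min B_1<\cdots<\min B_j$, associate the restricted growth function $w_F\in[j]^n$ given by $w_F(k)=i$ when $k\in B_i$. I order the facets in reverse lexicographic order of their RGFs, so $F_1$ is the all-singletons partition (with RGF $(1,2,\ldots,n)$) and $F_t=\{[n]\}$ is the trivial partition (with RGF $(1,1,\ldots,1)$).

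The technical heart is the following \emph{refinement lemma}: if $F'$ is obtained from $F$ by replacing a single block $B$ by some partition $P=\{C_1,\ldots,C_p\}$ of $B$ with $p\ge 2$ and $\min C_1<\cdots<\min C_p$, then $w_{F'}>_{\mathrm{lex}} w_F$. I would prove this by comparing RGF values position by position. For $k<\min C_2$, the blocks of $F$ and of $F'$ with minimum at most $k$ are in bijection (the only change is $B\leftrightarrow C_1$, which share the same minimum element), so the rank of the block containing $k$ is identical in the two orderings and $w_F(k)=w_{F'}(k)$. At $k=\min C_2$, however, $k$ still lies in $B$ inside $F$ (giving label equal to the rank $i$ of $B$), while in $F'$ it lies in the newly created block $C_2$, whose rank is at least $i+1$; this is the first position where the RGFs differ, and $w_{F'}$ is strictly larger there. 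Getting this rank bookkeeping straight is the main technical hurdle.

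From the refinement lemma I deduce the combinatorial fact that drives the shelling: whenever $q<s$, the set $F_s\setminus F_q$ contains a non-singleton block. Arguing the contrapositive, suppose every non-singleton block of $F_s$ lies in $F_q$; then $F_q$ agrees with $F_s$ on all non-singleton blocks and differs only by some partition $M$ of the singleton region of $F_s$. Applying the refinement lemma repeatedly to split each non-singleton block of $M$ into singletons transforms $F_q$ into $F_s$ while strictly increasing the RGF at every step, forcing $w_{F_s}>_{\mathrm{lex}} w_{F_q}$ and hence $s<q$.

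Finally, I apply \thref{L2.1}. Given $q<s$, choose any non-singleton block $x\in F_s\setminus F_q$ (provided by the previous step) and any partition $P$ of $x$ with $|P|\ge 2$, and let $F_r=(F_s\setminus\{x\})\cup P$. Then $F_r$ is a partition of $[n]$, $F_r\cap F_s=F_s\setminus\{x\}$, and $F_q\cap F_s\subseteq F_s\setminus\{x\}$ because $x\notin F_q$. The refinement lemma gives $w_{F_r}>_{\mathrm{lex}} w_{F_s}$, so $r<s$, completing the verification. The extreme case $F_s=F_t=\{[n]\}$ is handled uniformly because the single block $[n]$ is non-singleton (for $n\ge 2$), so the same construction applies with $F_s\setminus\{x\}=\emptyset=F_q\cap F_s$ for every $q<t$.
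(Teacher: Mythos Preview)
Your argument is correct, and the final move---pick a non-singleton block $x\in F_s\setminus F_q$, split it to produce $F_r$, and invoke \thref{L2.1}---is exactly what the paper does. The difference lies entirely in the choice of linear order and in how one certifies that such a non-singleton block exists. The paper orders facets simply by \emph{decreasing number of blocks}, breaking ties arbitrarily; then $q<s$ forces $|F_q|\ge |F_s|$, and since $F_q\setminus F_s$ and $F_s\setminus F_q$ are both partitions of the same set $[n]'=[n]\setminus\bigcup(F_q\cap F_s)$ with $|F_q\setminus F_s|\ge |F_s\setminus F_q|$, a two-line pigeonhole argument shows $F_s\setminus F_q$ cannot consist only of singletons. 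Your reverse-lex order on restricted growth functions achieves the same conclusion via the refinement lemma, which is correct but is extra machinery. What your approach buys is a completely explicit total order with no arbitrary tie-breaking, which could be convenient if one later wants a canonical shelling; what the paper's approach buys is brevity. Either way the restriction map $\mathcal{R}(F)$ comes out to be the set of non-singleton blocks of $F$, so the downstream homology computation is unaffected.
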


\begin{proof}
We order the facets of $D_n$ by decreasing number of blocks.  Thus, if
$|F_q|>|F_s|$, then $F_q$ precedes $F_s$; facets with the same number of blocks are ordered arbitrarily.

We verify the criterion in Lemma~\ref{L2.1}.  Let $F_q$ and $F_s$ be distinct facets with $F_q$ preceding $F_s$.  Then $|F_q|\geq |F_s|$.  Remove the blocks common to the two partitions and set
\[
U=[n]\setminus \bigcup (F_q\cap F_s).
\]
The collections
\[
F_q\setminus F_s \qquad \text{and}\qquad F_s\setminus F_q
\]
are both partitions of the same set $U$.  Since the common blocks have been removed from both facets, the inequality $|F_q|\geq |F_s|$ implies
\[
|F_q\setminus F_s|\geq |F_s\setminus F_q|.
\]
We claim that $F_s\setminus F_q$ contains a non-singleton block.  If every block of $F_s\setminus F_q$ were a singleton, then $F_s\setminus F_q$ would be the discrete partition of $U$, so
\[
|F_s\setminus F_q|=|U|.
\]
The preceding inequality would force $|F_q\setminus F_s|\geq |U|$.  But no partition of $U$ has more than $|U|$ blocks, so $F_q\setminus F_s$ would also be the discrete partition of $U$.  Hence
\[
F_q\setminus F_s=F_s\setminus F_q,
\]
and therefore $F_q=F_s$, a contradiction.  Thus, there exists a non-singleton block
\[
B\in F_s\setminus F_q.
\]

Choose a decomposition $B=B'\sqcup B''$ into two nonempty disjoint subsets, and define
\[
F_r=(F_s\setminus\{B\})\cup\{B',B''\}.
\]
Then $F_r$ is a partition of $[n]$ with one more block than $F_s$, so $F_r$ precedes $F_s$ in the chosen order.  Moreover,
\[
F_r\cap F_s=F_s\setminus\{B\}.
\]
Since every block common to $F_q$ and $F_s$ is still present in $F_s\setminus\{B\}$, we have
\[
F_q\cap F_s\subseteq F_r\cap F_s=F_s\setminus\{B\}.
\]
This is exactly the condition in Lemma~\ref{L2.1}.  Therefore, the displayed ordering is a shelling of $D_n$.
\end{proof}

\section{Homology and Cross-Polytope Cycles}

\subsection{Homology of $D_n$}

Earlier in the examples we noted that $\widetilde{H}_1(D_n)$ is isomorphic to $\mathbb{Z}^{D(n,2,2)}$. We now prove the general statement: $\widetilde{H}_{j-1}(D_n)$ is isomorphic to $\mathbb{Z}^{D(n,j,j)}$, for $1\leq j\leq n$.

\begin{lemma}\label{L3.3}
Let $\Delta=D_n$. Then $\Gamma_{j,k}=D_{n,j,k}$, so $|\Gamma_{j,k}|=D(n,j,k)$.
\end{lemma}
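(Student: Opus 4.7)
The plan is to show, for every facet $F = F_s$ in the shelling constructed in \thref{L3.2}, that $\mathcal{R}(F)$ equals exactly the set of non-singleton blocks of $F$. Once this is established, $|F|=j$ and $|\mathcal{R}(F)|=k$ hold simultaneously precisely when $F$ is a partition of $[n]$ with $j$ blocks and $k$ non-singleton blocks, which is the definition of $D_{n,j,k}$.

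First I would unpack the definition: a block $x \in F_s$ lies in $\mathcal{R}(F_s)$ if and only if there exists some earlier facet $F_r$ (i.e., $r<s$) with $F_s\setminus\{x\}\subseteq F_r$. So the question reduces, block by block, to whether the partial partition $F_s\setminus\{x\}$ can be completed to a partition of $[n]$ that comes before $F_s$ in the shelling order.

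Next I would handle the two cases. If $x$ is a non-singleton block of $F_s$, say $x = x_1\sqcup x_2$ with both parts non-empty, then $F_r := (F_s\setminus\{x\})\cup\{x_1,x_2\}$ is a partition of $[n]$ with $|F_r|=|F_s|+1>|F_s|$. By the rule that larger partitions come earlier in the ordering, $r<s$, and clearly $F_s\setminus\{x\}\subseteq F_r$, so $x\in \mathcal{R}(F_s)$. Conversely, suppose $x=\{a\}$ is a singleton block of $F_s$ and that some facet $F_r$ satisfies $F_s\setminus\{x\}\subseteq F_r$. Since $F_s\setminus\{x\}$ already partitions $[n]\setminus\{a\}$, the remaining blocks of $F_r$ must partition $\{a\}$, forcing the only possibility $\{a\}\in F_r$, hence $F_r=F_s$. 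No strictly earlier $F_r$ exists, so $x\notin \mathcal{R}(F_s)$.

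Combining the two cases gives $\mathcal{R}(F_s)=\{x\in F_s : x \text{ is a non-singleton block}\}$, from which $|\mathcal{R}(F_s)|$ equals the number of non-singleton blocks of $F_s$, and the claimed identification $\Gamma_{j,k}=D_{n,j,k}$ follows immediately. There is no real obstacle here; the only point that needs a little care is the converse direction, where one must notice that a singleton cannot be non-trivially subdivided, so the hypothetical earlier facet collapses to $F_s$ itself.
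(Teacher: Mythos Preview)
Your proposal is correct and follows essentially the same approach as the paper: both arguments hinge on showing that a non-singleton block $x$ can be split to produce a strictly larger facet (hence $x\in\mathcal{R}(F_s)$), while for a singleton block $x=\{a\}$ the only facet containing $F_s\setminus\{x\}$ is $F_s$ itself (hence $x\notin\mathcal{R}(F_s)$). The paper packages this as a double inclusion $\Gamma_{j,k}\subseteq D_{n,j,k}$ and $D_{n,j,k}\subseteq\Gamma_{j,k}$, whereas you compute $\mathcal{R}(F_s)$ directly and read off the identification, but the content is the same.
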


\begin{proof}
Recall that $\Delta_{s-1}=\bigcup_{r<s}\overline{F_r}$ is the subcomplex generated by the facets preceding $F_s$ in the shelling order. Let $F_s$ be a facet of $D_n$, and write
\[
F_s=\{B_1,B_2,\ldots,B_j\}.
\]
Thus $F_s$ is a partition of $[n]$ into $j$ blocks. We show that the restriction set $\mathcal R(F_s)$ consists exactly of the non-singleton blocks of $F_s$.

First, suppose that $B\in F_s$ is non-singleton. Choose a partition $\{B',B''\}$ of $B$ into two nonempty disjoint subsets, and set
\[
F_r=(F_s\setminus\{B\})\cup\{B',B''\}.
\]
Then $F_r$ is a partition of $[n]$ with one more block than $F_s$. Hence, $F_r$ appears before $F_s$ in the shelling order. Since
\[
F_s\setminus\{B\}\subseteq F_r,
\]
we have $F_s\setminus\{B\}\in \Delta_{s-1}$. Therefore $B\in\mathcal R(F_s)$.

Now suppose that $B\in F_s$ is a singleton block, say $B=\{i\}$. The blocks of $F_s\setminus\{B\}$ already partition $[n]\setminus\{i\}$. Any facet of $D_n$ containing $F_s\setminus\{B\}$ must be a partition of $[n]$ that contains all of these blocks. The only element not yet covered is $i$, and so the only possible additional block is $\{i\}$. Thus the only facet containing $F_s\setminus\{B\}$ is $F_s$ itself. Hence $F_s\setminus\{B\}\notin \Delta_{s-1}$, and $B\notin\mathcal R(F_s)$.

Therefore
\[
\mathcal R(F_s)=\{B\in F_s: |B|\geq 2\}.
\]
It follows that $|\mathcal R(F_s)|=k$ if and only if $F_s$ has exactly $k$ non-singleton blocks. Thus, the facets in $\Gamma_{j,k}$ are precisely the partitions of $[n]$ into $j$ blocks with exactly $k$ non-singleton blocks. This is exactly the set $D_{n,j,k}$, and hence $|\Gamma_{j,k}|=D(n,j,k)$.
\end{proof}

We are now ready to state the homology of $D_n$.

\begin{theorem}\label{T:homology-main}
For $1\leq j\leq n$,
\[
\widetilde{H}_{j-1}(D_n)\cong \mathbb{Z}^{D(n,j,j)}.
\]
\end{theorem}

\begin{proof}
Theorem~\ref{T2.2} says
\[
\widetilde{H}_{j-1}(D_n)\cong \mathbb{Z}^{|\Gamma_{j,j}|}.
\]
By Lemma~\ref{L3.3}, $\Gamma_{j,j}=D_{n,j,j}$. Therefore
\[
\widetilde{H}_{j-1}(D_n)\cong \mathbb{Z}^{D(n,j,j)}.
\]
\end{proof}

\subsection{Basis of $\widetilde{H}_{j-1}(D_n)$}

Earlier, in the examples section, we saw that the basis elements for
$\widetilde{H}_1(D_n)$ can be represented by cycles that are combinatorially
equivalent to the boundary of a $2$-dimensional cross-polytope. We now make this
construction precise.

Consider the non-singleton partition of size $2$
\[
\{12,34\}.
\]
To construct a cycle containing this partition, we choose one element from each
of its non-singleton blocks. For example, choose $1$ from the block $12$ and
$4$ from the block $34$. Replacing each chosen block by the singleton block formed from its chosen element in
all possible ways gives the formal sum
\[
\{12,34\}
+\{1,34\}
+\{12,4\}
+\{1,4\}.
\]
This formal sum is a $1$-cycle. Moreover, it contains exactly one
non-singleton partition of size $2$, namely $\{12,34\}$.

There are four possible choices of elements associated with the
partition $\{12,34\}$:
\[
1,3, \qquad 1,4, \qquad 2,3, \qquad 2,4.
\]
Each choice gives a corresponding $1$-cycle. Figure~\ref{F3}\subref{F3a} shows
the subcomplex of $D_4$ containing all of these choices. From the figure, one
can see that the resulting cycles are homologous; equivalently, any two of them
differ by a boundary. Later, the theory of shellable nonpure complexes will
give this conclusion in general.

This example suggests a general method for constructing a $1$-cycle containing
exactly one prescribed non-singleton partition of size $2$. More importantly,
it suggests a higher-dimensional construction: from any non-singleton partition
of size $j$, one can construct a $(j-1)$-cycle by choosing one element
from each non-singleton block and forming the corresponding singleton blocks.
These cycles will be the cross-polytope cycles that appear in the homology basis.

Before giving the general construction, we describe the first nontrivial
$2$-dimensional case explicitly. The first value of $n$ for which such a cycle
can occur is $n=6$, because this is the first value for which there exists a
non-singleton partition of size $3$. Consider the partition
\[
\{12,34,56\}.
\]
Choose one element from each block; for instance, choose
\[
1\in \{12\}, \qquad 3\in \{34\}, \qquad 5\in \{56\}.
\]
Now replace any subset of the three non-singleton blocks by the corresponding
singleton block. This produces the formal sum
\[
\begin{aligned}
&\{12,34,56\}
+\{1,34,56\}
+\{12,3,56\}
+\{12,34,5\} \\
&\quad
+\{1,3,56\}
+\{1,34,5\}
+\{12,3,5\}
+\{1,3,5\}.
\end{aligned}
\]
This formal sum is a $2$-cycle. Combinatorially, it is the boundary of a
$3$-dimensional cross-polytope.

As in the $1$-dimensional case, different choices of singleton representatives
produce different cycles. For example, one could choose $2$ instead of $1$ from
the block $12$, or $4$ instead of $3$ from the block $34$, and so on. These
different cycles are homologous: any two of them differ by a boundary. This can
be verified directly in this example by an explicit computation, but the theory
of nonpure shellability will establish the corresponding statement in full
generality.

Let $F = \{B_1, B_2,\ldots, B_j\}\in D_{n,j,j}$.  We now construct a $(j-1)$-cycle from $F$. For each $i=1,\dots,j$, choose an element $b_i\in B_i$, and write
\[
B_i^+:=B_i, \qquad B_i^-:=\{b_i\}.
\]
Note that $B_i^-\neq B_i^+$ because $B_i$ is non-singleton. Define
\[
HO(F)= \{\{B_1^{\epsilon_1}, B_2^{\epsilon_2},\ldots, B_j^{\epsilon_j}\}: (\epsilon_1,\epsilon_2,\ldots,\epsilon_j)\in\{+,-\}^j\}.
\]
That is, $HO(F)$ is the collection of all partial partitions obtained from $F$ by replacing any subset of the blocks $B_i$ with the corresponding singleton $B_i^-$.

\begin{theorem}\label{T3.5}
Let $F=\{B_1,B_2,\ldots,B_j\}\in D_{n,j,j}$, and $\overline{HO}(F)$ be the subcomplex of $D_n$ generated by $HO(F)$.  Then $\overline{HO}(F)$ is combinatorially isomorphic to the boundary complex of the $j$-dimensional cross-polytope.  In particular, the cycle $HO(F)=\sigma_F$ of $\overline{HO}(F)$ determines a homology class in $\widetilde H_{j-1}(D_n)$, and the classes $[\sigma_F]$, as $F$ ranges over $D_{n,j,j}$, form a basis for $\widetilde H_{j-1}(D_n)$.
\end{theorem}

\begin{proof}
It is enough to identify $\overline{HO}(F)$ with the boundary complex of the standard $j$-dimensional cross-polytope and then apply Theorem~\ref{T2.2}. Let $B_i^0$ denote the convention that the $i$th entry is omitted from the partial partition. Then the faces of $\overline{HO}(F)$ are precisely
\[
\{\{B_1^{\epsilon_1},B_2^{\epsilon_2},\ldots,B_j^{\epsilon_j}\}:
(\epsilon_1,\ldots,\epsilon_j)\in\{+,0,-\}^j\},
\]
where entries with exponent $0$ are omitted.

The boundary complex of the standard $j$-dimensional cross-polytope has vertices
\[
\pm e_1,\ldots,\pm e_j.
\]
Its faces are obtained by choosing at most one of $e_i$ and $-e_i$ for each coordinate $i$. Equivalently, its face poset is indexed by sign vectors
\[
(\epsilon_1,\ldots,\epsilon_j)\in\{+,0,-\}^j,
\]
where $0$ means that no vertex from the $i$th antipodal pair is chosen. The order is given by
\[
(\epsilon_1,\ldots,\epsilon_j)\leq (\delta_1,\ldots,\delta_j)
\]
if and only if, for every $i$, either $\epsilon_i=0$ or $\epsilon_i=\delta_i$.

The map
\[
\{B_1^{\epsilon_1},B_2^{\epsilon_2},\ldots,B_j^{\epsilon_j}\}
\longmapsto
(\epsilon_1,\epsilon_2,\ldots,\epsilon_j)
\]
is therefore a poset isomorphism from the face poset of $\overline{HO}(F)$ to the face poset of the boundary complex of the standard $j$-dimensional cross-polytope. 

By construction, the only facet of $D_n$ supported in $\sigma_F$ is $F$ itself: replacing any $B_i$ by $B_i^-$ leaves the elements of $B_i\setminus B_i^-$ uncovered, so the resulting partial partition is not a partition of $[n]$ and hence is not a facet of $D_n$. Thus, $\sigma_F$ contains a unique element of $\Gamma_{j,j}$. Applying Theorem~\ref{T2.2}, the classes $[\sigma_F]$ form a basis for $\widetilde H_{j-1}(D_n)$, with one basis element for each $F\in D_{n,j,j}$.
\end{proof}

\begin{remark}
The homology class $[\sigma_F]$ does not depend on the choice of elements $b_i\in B_i$ used in the construction. If $\sigma_F'$ is the cycle obtained from a different choice, then $\sigma_F-\sigma_F'$ is supported in the subcomplex
\[
D_n^*:=D_n\setminus \bigcup_{j\geq 1}\Gamma_{j,j}.
\]
By \citet[Theorem~4.1]{wachs}, $D_n^*$ is contractible, so $[\sigma_F]=[\sigma_F']$.
\end{remark}

\section{Counting Non-singleton Partitions}

The non-singleton partitions enumerate the homology basis elements. In this section, we deduce a formula for the set of partitions of $[n]$ of size $j$ containing exactly $k$ non-singleton blocks. We denote this set by ${D_{n,j,k}}$ and the size of $D_{n,j,k}$  by ${D(n,j,k)}.$

Using the Stirling numbers of the second kind, denoted by ${S(n,j)}$, and M\"obius inversion on the Boolean lattice \citep{StanleyEC1,Rota}, we deduce an explicit formula for $D(n,j,k)$. \citet{bona} use generating functions to calculate $D(n,j,j)$.

\begin{lemma}
\[
D(n,j,k)=\binom{n}{j-k}\sum_{i \leq n-(j-k)} (-1)^{n-(j-k)-i}\binom{n-(j-k)}{i} S(i,j+i-n).
\]
\end{lemma}

\begin{proof}
Fix $K\subseteq [n]$ and an integer $j\geq 0$. For each size-$j$ partition $\pi$ of $K$, let $T(\pi)\subseteq K$ denote the set of elements lying in singleton blocks of $\pi$. We define two functions on $\mathcal{P}(K)$,
\[
\mathcal{S}_{K,j}(I)\;=\;\bigl|\{\pi:\;\pi\text{ is a size-}j\text{ partition of }K\text{ with }T(\pi)\supseteq K\setminus I\}\bigr|,
\]
and
\[
\mathcal{D}_{K,j}(I)\;=\;\bigl|\{\pi:\;\pi\text{ is a size-}j\text{ partition of }K\text{ with }T(\pi)= K\setminus I\}\bigr|.
\]
That is, $\mathcal{S}_{K,j}(I)$ counts size-$j$ partitions of $K$ in which every element of $K\setminus I$ forms a singleton block, while $\mathcal{D}_{K,j}(I)$ counts those in which the singleton blocks are exactly $\{\{x\}: x\in K\setminus I\}$.

In a partition counted by $\mathcal{S}_{K,j}(I)$, the elements of $K\setminus I$ are forced singletons, so the remaining $|I|$ elements are partitioned into $j-|K\setminus I|$ blocks (with no further restriction). Hence
\[
\mathcal{S}_{K,j}(I)\;=\;S(|I|,\,j-|K\setminus I|).
\]
On the other hand, for each size-$j$ partition $\pi$ of $K$, set $I'=K\setminus T(\pi)$; then $\pi$ is counted by $\mathcal{D}_{K,j}(I')$, and is counted by $\mathcal{S}_{K,j}(I)$ if and only if $T(\pi)\supseteq K\setminus I$, i.e., if and only if $I'\subseteq I$. Therefore
\[
\mathcal{S}_{K,j}(I)\;=\;\sum_{I'\subseteq I}\mathcal{D}_{K,j}(I'),\qquad \text{for all }I\subseteq K.
\]
By M\"obius inversion in the Boolean lattice $\mathcal{P}(K)$, where $\mu(I',I)=(-1)^{|I|-|I'|}$, we obtain
\[
\mathcal{D}_{K,j}(I)\;=\;\sum_{I'\subseteq I}(-1)^{|I|-|I'|}\,\mathcal{S}_{K,j}(I').
\]
Setting $I=K$, $|K|=k$, and grouping subsets of $K$ by their size $|I'|=i$, of which there are $\binom{k}{i}$, gives
\[
\mathcal{D}_{K,j}(K)\;=\;\sum_{i\leq k}(-1)^{k-i}\binom{k}{i}\,S(i,\,j-k+i).
\]
Note that $\mathcal{D}_{K,j}(K)$ counts size-$j$ partitions of $K$ with $T(\pi)=\emptyset$, i.e., with no singleton blocks. Hence $\mathcal{D}_{K,j}(K)=D(k,j,j)$, and we obtain
\[
D(k,j,j)\;=\;\sum_{i\leq k}(-1)^{k-i}\binom{k}{i}\,S(i,\,j-k+i).
\]

For the general formula, observe that there are exactly $\binom{n}{j-k}$ ways to choose which $j-k$ elements of $[n]$ form the singleton blocks of a partition counted by $D(n,j,k)$. For each such choice, the remaining $n-(j-k)$ elements must be partitioned into $k$ non-singleton blocks, contributing $D(n-(j-k),k,k)$ partitions. Therefore
\[
D(n,j,k)\;=\;\binom{n}{j-k}\,D(n-(j-k),\,k,\,k).
\]
Substituting the formula for $D(n-(j-k),k,k)$ obtained above (with $k$ replaced by $n-(j-k)$ and $j$ replaced by $k$, noting that $k-(n-(j-k))+i=j+i-n$) gives
\[
D(n,j,k)\;=\;\binom{n}{j-k}\sum_{i\leq n-(j-k)}(-1)^{n-(j-k)-i}\binom{n-(j-k)}{i}S(i,\,j+i-n).\qedhere
\]
\end{proof}

\section{Automorphisms and Further Geometric Questions}

We conclude with two structural observations.  First, the full automorphism group of $D_n$ is induced by the natural action of the symmetric group on $[n]$.  Second, the complex raises natural geometric realization questions.

\subsection{Automorphisms of $D_n$}

\begin{theorem}\label{T3.7}
For $n\geq 1$, the automorphism group of the simplicial complex $D_n$ is isomorphic to $S_n$.
\end{theorem}

\begin{proof}
Each permutation $\pi\in S_n$ acts on the vertex set of $D_n$ by
\[
B\mapsto \pi(B),
\]
where $B\subseteq [n]$ is nonempty.  Since disjointness is preserved under permutations, this action sends simplices to simplices.  Thus each $\pi$ induces a simplicial automorphism of $D_n$, and we obtain an injective homomorphism
\[
S_n\hookrightarrow \operatorname{Aut}(D_n).
\]

To prove surjectivity, let $\varphi\in \operatorname{Aut}(D_n)$.  The vertices $\{1\},\ldots,\{n\}$ form the unique simplex of dimension $n-1$, since a simplex in $D_n$ is a collection of pairwise disjoint nonempty subsets of $[n]$, and the only such collection of size $n$ consists of all singleton subsets.  Therefore $\varphi$ permutes the set $\{\{1\},\ldots,\{n\}\}$.  Hence there exists $\pi\in S_n$ such that
\[
\varphi(\{i\})=\{\pi(i)\}
\]
for all $i\in[n]$.

Now let $B\subseteq[n]$ be nonempty.  For each $i\in[n]$, the pair $\{B,\{i\}\}$ is an edge of $D_n$ if and only if $i\notin B$.  Since $\varphi$ preserves adjacency,
\[
i\notin B
\quad\Longleftrightarrow\quad
\{\pi(i)\}\text{ is disjoint from }\varphi(B).
\]
Equivalently,
\[
i\in B
\quad\Longleftrightarrow\quad
\pi(i)\in \varphi(B).
\]
Thus $\varphi(B)=\pi(B)$.  Therefore every automorphism of $D_n$ is induced by a permutation of $[n]$, and so
\[
\operatorname{Aut}(D_n)\cong S_n.
\]
\end{proof}

\subsection{Geometric realization questions}

Recall that $D_n$ has dimension $n-1$.  Thus $\mathbb R^{n-1}$ is the smallest Euclidean space in which one could hope to realize $D_n$ as a geometric simplicial complex without increasing the ambient dimension beyond the dimension of the complex itself.  By a \emph{linear embedding} of $D_n$ into $\mathbb R^k$ we mean a continuous injective map of the geometric realization $|D_n|\to \mathbb R^k$ that is affine on each closed simplex. This leads to the following problem.

\begin{problem}\label{P4.1}
Determine whether $D_n$ admits a linear embedding
\[
\varphi:D_n\hookrightarrow \mathbb R^{n-1}
\]
that realizes $D_n$ as a geometric simplicial complex.
\end{problem}

Since $\operatorname{Aut}(D_n)\cong S_n$, one can also ask for realizations that respect the full combinatorial symmetry of the complex.  Following the standard viewpoint of group actions on geometric objects \citep{Bredon}, we formulate the equivariant version as follows.

\begin{definition}\label{D4.2}
A linear embedding $\varphi:D_n\to \mathbb R^k$ is called \emph{$S_n$-equivariant} if there exists a homomorphism
\[
\rho:S_n\to \operatorname{Isom}(\mathbb R^k)
\]
such that, for all $\pi\in S_n$ and all simplices $\sigma\in D_n$,
\[
\varphi(\pi\cdot\sigma)=\rho(\pi)\varphi(\sigma).
\]
\end{definition}

\begin{problem}\label{P4.3}
Determine the smallest $k$ for which there exists an $S_n$-equivariant linear embedding
\[
\varphi:D_n\hookrightarrow \mathbb R^k.
\]
\end{problem}

A related question asks whether the homology basis constructed above can be realized geometrically in a compatible way.

\begin{problem}\label{P4.4}
Does there exist a linear embedding of $D_n$ into Euclidean space such that the cycles constructed in Theorem~\ref{T3.5} are realized as geometric cross-polytopes \citep{Ziegler}?  If so, what is the minimal dimension in which this can be achieved?
\end{problem}

These questions indicate that the containment-order partial partition complex has additional geometric structure beyond the homological description proved here.

%

%

\end{document}